\numberwithin{equation}{section}
\title[Breaking down the reduced Kronecker coefficients]
{Breaking down the reduced Kronecker coefficients}
\author[Igor Pak and Greta Panova]{Igor Pak$^\star$ \ and \ Greta Panova$^\dagger$}
\thanks{\ \today}
\thanks{\thinspace ${\hspace{-.45ex}}^\star$Department of Mathematics,
UCLA, Los Angeles, CA~90095.
\hskip.06cm
Email:
\hskip.06cm
\texttt{pak@math.ucla.edu}}
\thanks{\thinspace ${\hspace{-.45ex}}^\dagger$Department of Mathematics,
 USC, Los Angeles, CA~90089.
\hskip.06cm Email: \hskip.06cm
\texttt{gpanova@usc.edu}}
\newtheorem{thm}{Theorem}
\newtheorem{lemma}[thm]{Lemma}
\newtheorem{prop}[thm]{Proposition}
\newtheorem{conj}[thm]{Conjecture}
\numberwithin{equation}{section} 
\def\sq{\square}
\def\nn{\mathbb N}
\def\cc{\mathbb C}
\def\la{\lambda}
\def\ga{\gamma}
\def\si{\sigma}
\def\al{\alpha}
\def\be{\beta}
\def\cX{\mathcal X}
\def\wt{\widetilde}
\def\<{\langle}
\def\>{\rangle}
\def\GL{ {\text {\rm GL} } }
\def\0{{\mathbf 0}}
\def\SP{{\textsc{\#P}}}
\def\NP{{\textsc{NP}}}
\def\poly{{\textsc{P}}}
\def\.{\hskip.06cm}
\def\ts{\hskip.03cm}
\def\nin{\noindent}
\def\rg{\overline{g}}
\begin{document}

\begin{abstract}
We resolve three interrelated problems on \emph{reduced
Kronecker coefficients} \ts $\rg(\al,\be,\ga)$.  First, we
disprove the \emph{saturation property} which states that \ts
$\rg(N\al,N\be,N\ga)>0$ \ts implies \ts
$\rg(\al,\be,\ga)>0$ \ts for all \ts $N>1$. Second,
we esimate the maximal \ts $\rg(\al,\be,\ga)$, over all
\ts $|\al|+|\be|+|\ga| = n$.
 Finally,  we show that computing \ts $\rg(\la,\mu,\nu)$ \ts
is strongly $\SP$-hard, i.e.\ $\SP$-hard when
the input $(\la,\mu,\nu)$ is in unary.
\end{abstract}


\maketitle

\section{Introduction} \label{sec:intro}

\nin
The \emph{reduced Kronecker coefficients} were introduced by
Murnaghan in 1938 as the stable limit of \emph{Kronecker coefficients},
when a long first row is added:
$$(\circ) \quad \ \
\rg(\al,\be,\ga) \. := \. \lim_{n\to \infty} \. g\bigl(\al[n],\be[n],\ga[n]\bigr), \ \ \.
\text{where} \quad \al[n]:= (n-|\al|,\al_1,\al_2,\ldots), \ \. n\ge |\al|+\al_1\ts,
$$
see~\cite{M1,M2}.
They generalize the classical \emph{Littlewood--Richardson} (LR--) \emph{coefficients}:
$$
\rg(\al,\be,\ga) \, = \, c^\al_{\be\ga} \quad \text{for} \quad |\al|\. = \. |\be| \ts +\ts |\ga|\ts,
$$
see~\cite{Lit}.  As such, they occupy the middle ground between the Kronecker
and the LR--coefficients.  While the latter are well understood and have
a number of combinatorial interpretations, the former are notorious for
their difficulty (cf.~\cite[Prob.~2.32]{Kir}).  It is generally believed that
the reduced Kronecker coefficients are simpler and more accessible than the
(usual) Kronecker coefficients, cf.~\cite{Kir,OZ}.
The results of this paper suggest otherwise, see~$\S$\ref{ss:finrem-moral}.


\subsection{Saturation property}
The \emph{Kronecker coefficients} \ts $g(\la,\mu,\nu)$,
are defined as
$$
g(\la,\mu,\nu) \. :=\. \langle \chi^\la \ts\chi^\mu, \ts \chi^\nu\rangle \. = \. \frac{1}{n!}
\. \sum_{\si \in S_n} \chi^\la(\si)\.\chi^\mu(\si)\.\chi^\nu(\si),
$$
where \ts $\la,\mu,\nu\vdash n$, and \ts $\chi^\la$ \ts
is the irreducible character of~$S_n$ corresponding to partition~$\la$.
Similarly, the \emph{Littlewood--Richardson coefficients} are defined as
$$
c^\la_{\mu\nu} \. := \. \bigl\langle \chi^\la \.,
\. \chi^\mu\otimes\chi^\nu\uparrow^{S_n}_{S_k \times S_{n-k}}\bigr\>\,, \quad \text{where}
\quad \la\vdash n, \. \mu \vdash k, \. \nu \vdash n-k\ts.
$$
It is easy to see that \ts $c^{N \la}_{N\mu, N\nu} \ge c^\la_{\mu\nu}$ \ts
for all \ts $N\ge 1$, where \ts $N\la = (N\la_1, N\la_2,\ldots)$.
The \emph{saturation property} is the fundamental
result by Knutson and Tao~\cite{KT}, giving a converse:
$$
c^{N \la}_{N\mu, N\nu} > 0 \quad \text{for some} \. \ N\ge 1
 \quad \Longrightarrow \quad  c^\la_{\mu\nu} > 0\ts.
$$

For a partition $\al\vdash k$ and $n\ge k+\al_1$, we have \ts
$\al[n]= (n-k,\al_1,\al_2,\ldots)\vdash n$. It is known that \ts
$g(\al[n+1],\be[n+1],\ga[n+1])\ts \ge \ts g(\al[n],\be[n],\ga[n])$
\ts for all~$n$, whenever the right hand side is defined.  In this notation,
Murnaghan's result~$(\circ)$ states that
\ts $\rg(\al,\be,\ga) = g(\al[n],\be[n],\ga[n])$ \ts
for $n$ large enough.

The saturation property fails for the Kronecker coefficients, i.e.
$g(2^2,2^2,2^2)=1$ \ts but \ts $g(1^2,1^2,1^2)=0$. It is a long-standing
open problem whether it holds for the reduced Kronecker coefficients.
This was independently conjectured in 2004 by Kirillov \cite[Conj.~2.33]{Kir}
and Klyachko \cite[Conj.~6.2.4]{Kly}~:

\begin{conj}[{Kirillov, Klyachko}]\label{conj:sat}
The reduced Kronecker coefficients satisfy the saturation property:
$$\rg(N\al,N\be, N\ga) > 0 \quad \text{for some} \. \ N\ge 1
\quad \Longrightarrow \quad  \rg(\al,\be,\ga) > 0\ts.
$$
\end{conj}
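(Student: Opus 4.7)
Since the abstract announces the disproof of Conjecture~\ref{conj:sat}, my plan is to exhibit an explicit counterexample: a triple $(\al,\be,\ga)$ and some $N \ge 2$ with $\rg(N\al,N\be,N\ga) > 0$ but $\rg(\al,\be,\ga) = 0$. The first observation that sharply narrows the search is that, by the Knutson--Tao theorem, the LR--coefficients $c^\al_{\be\ga}$ already satisfy saturation, so whenever $|\al| = |\be| + |\ga|$ the reduced Kronecker coincides with an LR--coefficient and the conjecture holds trivially. A counterexample must therefore live in the genuinely Kronecker-like sector where the ``triangle equality'' on sizes fails.

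The central idea I would pursue is to transplant non-saturation from ordinary Kronecker coefficients (the standard witness being $g(2^2,2^2,2^2)=1$ while $g(1^2,1^2,1^2)=0$) up to the reduced level via Murnaghan's stability $\rg(\al,\be,\ga) = g(\al[n],\be[n],\ga[n])$ for $n$ large. The subtlety is that the sequence $g(\al[n],\be[n],\ga[n])$ is itself monotone non-decreasing in $n$, so a direct lift of a Kronecker counterexample does not immediately produce a reduced one. One must engineer $(\al,\be,\ga)$ so that the first-row padding forces cancellations at $N = 1$ which disappear at $N = 2$. Concretely, I would expand $\rg$ using a Murnaghan--Littlewood type formula into LR-- and (ordinary) Kronecker coefficients and hunt for small triples -- rectangles and near-rectangles first, by analogy with the $(2,2)$ Kronecker witness -- whose expansion vanishes at $N = 1$ but not at $N = 2$. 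For fixed small $|\al|,|\be|,|\ga|$ the stabilization threshold $n \ge |\al|+\al_1$ is modest, so each $\rg$ reduces to a finite and in principle computable inner product of characters of $S_n$.

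The main obstacle, underscored by the paper's own strong $\SP$--hardness result, is computational: exhaustive enumeration of candidates is feasible only for very small partitions, and certifying the \emph{vanishing} $\rg(\al,\be,\ga) = 0$ is generically harder than certifying positivity. The real work will likely be to identify a structured family of triples (say, all three partitions equal, all of rectangular shape, or a parametric family built from a Kronecker non-saturation gadget) on which $\rg$ admits a closed expression visibly zero at $N = 1$ and positive at $N = 2$, and then to turn that expression into a rigorous proof rather than a computer-algebra assertion. A secondary technical point is that any candidate must be checked against the LR--regime to make sure one has not inadvertently landed on a case where Knutson--Tao saturation already applies.
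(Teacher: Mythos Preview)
Your proposal is a search strategy, not a proof, and it is missing the two structural ingredients that make the paper's argument work without any computer search.

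On the vanishing side, you correctly identify that certifying $\rg(\al,\be,\ga)=0$ is the hard part, but you do not have a tool to do it. The paper uses Dvir's Durfee-size inequality: if $g(\la,\mu,\nu)>0$ then $d(\la)\le 2\,d(\mu)\,d(\nu)$. The point is that $d(\al[n])$ depends only on $\al$ once $n$ is large, so this constraint survives the $n\to\infty$ limit and kills $\rg(\al,\al,\ga)$ outright whenever $d(\ga)>2\,d(\al)^2$. Taking $\al$ a single column (so $d(\al[n])=1$) and $\ga$ with Durfee size $\ge 3$ already forces $\rg(\al,\al,\ga)=0$ by a one-line argument; no expansion into LR-- or Kronecker pieces is needed. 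Your plan to ``transplant'' the $(2^2,2^2,2^2)$ witness via a Murnaghan--Littlewood expansion does not obviously lead here, and the alternating-sign nature of such expansions makes them ill-suited to proving vanishing.

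On the positivity side, the paper does not compute $\rg(N\al,N\al,N\ga)$ directly either. It uses the semigroup property together with the Bessenrodt--Behns result that $g(\la,\la,\la)>0$ for self-conjugate $\la$ (and, for the general statement, the Ikenmeyer--Panova positivity for rectangular Kronecker coefficients). Concretely, with $\al=(1^{k^2-1})$ and $\ga=(k^{k-1})$ one gets $\rg(\al,\al,\ga)=0$ from Dvir, while $\rg(k\al,k\al,k\ga)\ge g(k^k,k^k,k^k)>0$ from self-conjugacy plus semigroup. The smallest instance, $\al=(1^5)$, $\ga=(3^2)$, $N=2$, already works; the paper verifies $\rg(2\al,2\al,2\ga)\ge 8$ by a single Kronecker evaluation. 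None of this requires the closed-form parametric family you were hoping to discover by experimentation.
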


This conjecture was motivated by the known converse:
$$\rg(\al,\be,\ga)\. > \.0   \quad \Longrightarrow \quad  \rg(N\al,N\be, N\ga) \. > \. 0
\ \ \ \text{for all} \ \ N\ge 1\ts,
$$
see below. Here is the first result of this paper.

\begin{thm}\label{t:sat}
For all \ts $k\ge 3$, the triple of partitions
\ts $\bigl(1^{k^2-1},1^{k^2-1},k^{k-1}\bigr)$ \ts is a counterexample
to Conjecture~\ref{conj:sat}.  Moreover, for every partition~$\ga$ \ts s.t.\ \ts
$\ga_2\ge 3$,
there are infinitely many pairs \ts $(a,b)\in \nn^2$ \ts for which the triple of partitions
\ts $(a^b,a^b,\ga)$ \ts is a counterexample to Conjecture~\ref{conj:sat}.
\end{thm}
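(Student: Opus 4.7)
The plan is to split the theorem into two tasks: (a) establish the base counterexample $(1^{k^2-1}, 1^{k^2-1}, k^{k-1})$ for each $k \geq 3$, and (b) extend to arbitrary $\gamma$ with $\gamma_2 \geq 3$ via the semigroup property of $\rg$.

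For (a), I would separately prove vanishing and positivity-after-scaling. For vanishing, $\rg(1^{k^2-1}, 1^{k^2-1}, k^{k-1}) = 0$, I would use Murnaghan stability to translate the question to showing
\[ g\bigl( (n-k^2+1, 1^{k^2-1}), \. (n-k^2+1, 1^{k^2-1}), \. (n-k^2+k, k^{k-1}) \bigr) \. = \. 0 \]
for all $n$ large.  Since the hook characters $\chi^{(n-a, 1^a)}$ coincide with characters of exterior powers $\wedge^a V$ of the standard $(n-1)$-dimensional $S_n$-module, this Kronecker coefficient equals the multiplicity of $\chi^{(n-k^2+k, k^{k-1})}$ in the $S_n$-decomposition of $\wedge^{k^2-1} V \otimes \wedge^{k^2-1} V$.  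Decomposing $\wedge^a V \otimes \wedge^a V$ first as a $GL(V)$-module via the Pieri rule yields a sum of Schur functors $S^{(2^i, 1^{2a - 2i})} V$, and the Kronecker coefficient becomes a finite sum of plethysm-type multiplicities $[S^\nu V : \chi^{\gamma[n]}]$.  I would aim to show each such plethysm coefficient vanishes for the target shape $(n-k^2+k, k^{k-1})$, exploiting the specific rectangular geometry of $k^{k-1}$ through dimensional and column-length constraints.

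For the positivity $\rg(N \cdot 1^{k^2-1}, N \cdot 1^{k^2-1}, N \cdot k^{k-1}) > 0$, I would try a small value of $N \geq 2$ and exhibit an explicit nonzero contribution, for instance via a Littlewood-type expansion writing $\rg$ as a sum of products of LR and ordinary Kronecker coefficients and isolating a single nonzero summand, or by invoking an existing positivity result for reduced Kronecker coefficients with rectangular shapes.

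For (b), I would exploit the semigroup property: if $\rg(\al_i, \be_i, \ga_i) > 0$ for $i = 1, 2$, then $\rg(\al_1 + \al_2, \be_1 + \be_2, \ga_1 + \ga_2) > 0$.  Given $\gamma$ with $\ga_2 \geq 3$, I would decompose $\gamma$ as a sum of rectangles of the form $k^{k-1}$ plus a residual partition for which $\rg$ is positive in a controlled way, thereby transporting the base vanishing to give $(a^b, a^b, \gamma)$ as a counterexample, with varying decompositions producing infinitely many valid pairs $(a,b) \in \nn^2$.  The principal obstacle I anticipate is the vanishing step: unlike positivity, which requires only a single nonzero contribution, vanishing must rule out \emph{every} constituent of a tensor product of exterior powers, and the proof likely rests on a sharp identity or combinatorial rule (for example a Garsia--Remmel or Rosas-type formula for Kronecker products involving hooks) tailored to the rectangular shape $k^{k-1}$.
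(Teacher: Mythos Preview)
Your proposal has a genuine gap in both the vanishing step and in part~(b), and it misses the key tool that makes the paper's proof short.

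For the vanishing $\rg(1^{k^2-1},1^{k^2-1},k^{k-1})=0$, you propose to expand $\wedge^{k^2-1}V\otimes\wedge^{k^2-1}V$ and rule out every constituent by a plethysm/column-length argument; you yourself flag this as the principal obstacle. The paper bypasses all of this with a single application of Dvir's bound: if $g(\la,\mu,\nu)>0$ then $d(\la)\le 2\,d(\mu)\,d(\nu)$. Since $\al[n]$ is a hook, $d(\al[n])=1$, while $d(\ga[n])=k\ge 3>2=2\,d(\al[n])^2$, so $g(\al[n],\al[n],\ga[n])=0$ for all large~$n$. The same Durfee-size inequality handles the vanishing in part~(b) for $\al=a^b$ once $a$ is chosen with $2a^2<d(\ga[n])$, which is exactly where the hypothesis $\ga_2\ge 3$ (hence $d(\ga[n])\ge 3$) enters. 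Your exterior-power route, even if it could be pushed through for the specific rectangle $k^{k-1}$, gives no obvious mechanism for general~$\ga$.

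Your plan for~(b) has a more serious flaw: you propose to ``transport the base vanishing'' via the semigroup property, but the semigroup property propagates \emph{positivity}, not vanishing. Writing $\ga$ as a sum of rectangles tells you nothing about $\rg(a^b,a^b,\ga)=0$. The paper instead obtains vanishing from Dvir's lemma as above, and obtains positivity after scaling from an independent positivity result for rectangular Kronecker coefficients (Ikenmeyer--Panova): under mild size constraints on $r,s$ and $|\nu|$, one has $g(s^r,s^r,\nu[rs])>0$. Choosing $r=b+1$, $s=Na$, $\nu=N\ga$ with $N$ large gives $\rg(N\al,N\al,N\ga)>0$. For part~(a), the positivity $\rg(k\al,k\al,k\ga)>0$ is obtained more concretely by combining symmetries, the semigroup property, and the fact that $g(\la,\la,\la)>0$ for self-conjugate~$\la$ applied to $\la=k^k$.
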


These results both contrast and complement~\cite[Cor.~6]{CR}, which
confirms the saturation property for triples of the form \ts $(a^b,a^b,a)$.


\subsection{Maximal values}
Our second result is a variation on Stanley's recent bound on the maximal
Kronecker and LR--coefficients:

\begin{thm}[{\cite{Sta-kron,EC2-supp}, see also~\cite{PPY}}] \label{t:stanley}
We have:
$$\aligned
(\ast) \qquad \quad \ts\  & \ \, \max_{\la\vdash n} \, \max_{\mu\vdash n} \,  \max_{\nu\vdash n}
\,\,\, g(\la,\mu,\nu) \  = \, \sqrt{n!} \,\. e^{-O(\sqrt{n})}\ts, \\ 
(\ast\ast)  \qquad \quad \, & \ts \max_{0\le k\le n} \. \max_{\la\vdash n} \. \max_{\mu\vdash k} \.
\max_{\nu\vdash n-k}  \,\. c^\la_{\mu,\nu} \,  = \, 2^{n/2 \ts \ts - \ts O(\sqrt{n})}\ts.
\endaligned
$$
\end{thm}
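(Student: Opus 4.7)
Both bounds combine three standard ingredients: the Vershik--Kerov--Regev asymptotic $\max_{\la\vdash n} f^\la = \sqrt{n!}\ts e^{-O(\sqrt{n})}$, the Hardy--Ramanujan bound $p(n) = e^{O(\sqrt{n})}$ on the number of partitions of $n$, and the ``dimension identities'' obtained by evaluating the character identities $\chi^\la\chi^\mu = \sum_\nu g(\la,\mu,\nu)\ts\chi^\nu$ and $\chi^\mu\otimes\chi^\nu\uparrow^{S_n}_{S_k\times S_{n-k}} = \sum_\la c^\la_{\mu\nu}\ts\chi^\la$ at the identity:
\begin{equation*}
\sum_{\nu\vdash n} g(\la,\mu,\nu)\ts f^\nu \, = \, f^\la\ts f^\mu, \qquad
\sum_{\la\vdash n} c^\la_{\mu\nu}\ts f^\la \, = \, \binom{n}{k}\ts f^\mu\ts f^\nu.
\end{equation*}

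For $(\ast)$, the upper bound follows from orthonormality of characters:
$$
\sum_\nu g(\la,\mu,\nu)^2 \. = \. \<\chi^\la\chi^\mu,\chi^\la\chi^\mu\> \. \le \. \bigl(\max_\si|\chi^\la(\si)|\bigr)^2\cdot \|\chi^\mu\|^2 \. = \. (f^\la)^2\ts,
$$
so $g(\la,\mu,\nu)\le f^\la$; the $S_3$-symmetry of $g$ then yields $g\le \min(f^\la,f^\mu,f^\nu) \le \sqrt{n!}\ts e^{-O(\sqrt n)}$. For the lower bound, fix $\la^*\vdash n$ with $f^{\la^*} = \sqrt{n!}\ts e^{-O(\sqrt n)}$ and apply Cauchy--Schwarz to the first identity above with $\la=\mu=\la^*$:
$$
(f^{\la^*})^4 \. = \. \Bigl(\sum_\nu g(\la^*,\la^*,\nu)\,f^\nu\Bigr)^{\!2} \. \le \. \sum_\nu g(\la^*,\la^*,\nu)^2 \,\cdot \sum_\nu (f^\nu)^2 \. = \. n!\cdot \sum_\nu g(\la^*,\la^*,\nu)^2\ts,
$$
whence $\sum_\nu g^2\ge n!\ts e^{-O(\sqrt n)}$. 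Pigeonholing over the $p(n)=e^{O(\sqrt n)}$ partitions of $n$ produces some $\nu$ with $g(\la^*,\la^*,\nu)\ge \sqrt{n!}\ts e^{-O(\sqrt n)}$.

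For $(\ast\ast)$, the upper bound comes from two Frobenius-reciprocity interpretations of $c^\la_{\mu\nu}$. As the multiplicity of $V^\la$ in the induced $(V^\mu\otimes V^\nu)\uparrow$, one has $c^\la_{\mu\nu}\le \binom{n}{k}f^\mu f^\nu/f^\la$; as the multiplicity of $V^\mu\otimes V^\nu$ in the restriction $V^\la\!\downarrow$, one has $c^\la_{\mu\nu}\le f^\la/(f^\mu f^\nu)$. Multiplying yields $(c^\la_{\mu\nu})^2\le \binom{n}{k}\le 2^n$. For the matching lower bound, take $k=\lfloor n/2\rfloor$ and $\mu=\nu=\mu^*$ where $\mu^*\vdash k$ maximizes $f^{\mu^*}=\sqrt{k!}\ts e^{-O(\sqrt n)}$; Cauchy--Schwarz applied to the second dimension identity gives $\sum_\la (c^\la_{\mu^*\mu^*})^2\ge \binom{n}{k}^2(f^{\mu^*})^4/n!$. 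Using $\binom{n}{k}(k!)^2 \asymp n!$ (with an analogous estimate for odd~$n$), the right-hand side simplifies to $\binom{n}{k}\ts e^{-O(\sqrt n)} = 2^n e^{-O(\sqrt n)}$, and pigeonholing over $\la\vdash n$ extracts some partition with $c^\la_{\mu^*\mu^*}\ge 2^{n/2}e^{-O(\sqrt n)}$.

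The principal difficulty is just bookkeeping: confirming that the subexponential slack contributed by $p(n)$, by the Vershik--Kerov--Regev tail for $f^{\la^*}$ and $f^{\mu^*}$, and by any parity adjustment when $k\approx n/2$, together never exceed $e^{O(\sqrt n)}$. Beyond that, every step reduces to an application of either character orthonormality, Cauchy--Schwarz, or a dimension count coming from Frobenius reciprocity.
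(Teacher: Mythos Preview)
The paper does not actually prove Theorem~\ref{t:stanley}; it is quoted from~\cite{Sta-kron,EC2-supp,PPY} and invoked as a black box in the proof of Theorem~\ref{t:kron-max}. Your argument is correct and is essentially the standard one from those references: the upper bounds follow from the elementary dimension estimates $g(\la,\mu,\nu)\le \min(f^\la,f^\mu,f^\nu)\le\sqrt{n!}$ and $(c^\la_{\mu\nu})^2\le\binom{n}{k}$, while the lower bounds come from Cauchy--Schwarz applied to the dimension identities, combined with the Vershik--Kerov--Regev asymptotics for $\max_\la f^\la$ and the Hardy--Ramanujan bound on~$p(n)$.
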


In~\cite{PPY}, we prove that the maximal Kronecker and LR--coefficients
appear when all three partitions have near-maximal dimension,
which in turn implies that they have a \emph{Vershik--Kerov--Logan--Shepp}
(VKSL) \emph{shape}. See also~\cite{PP2} for refined upper bounds on
(reduced) Kronecker coefficients with few rows. Here we obtain
the following analogue of Stanley's Theorem~\ref{t:stanley}.

\begin{thm} \label{t:kron-max}
We have:
$$\max_{a+b+c\le 3n}
\max_{\al\vdash a} \.\, \max_{\be \vdash b} \. \max_{\ga \vdash c}
\,\,\. \rg(\al,\be,\ga) \,  = \,\. \sqrt{n!} \,\. e^{O(n)}\ts.
$$
\end{thm}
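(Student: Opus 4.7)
The theorem is a two-sided asymptotic, so the argument splits into matching lower and upper bounds.

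For the lower bound, the strategy is to pull a near-optimal Kronecker triple back to the reduced setting. By Theorem~\ref{t:stanley}$(\ast)$ together with the VKLS-shape refinement from~\cite{PPY}, there exist partitions $\la,\mu,\nu\vdash n$, all of Vershik--Kerov--Logan--Shepp limit shape, with $g(\la,\mu,\nu)\ge \sqrt{n!}\,e^{-O(\sqrt n)}$. The VKLS shape forces $\la_1,\mu_1,\nu_1=O(\sqrt n)$. Set $\al:=(\la_2,\la_3,\ldots)$, and analogously $\be,\ga$. Then
$$|\al|+|\be|+|\ga| \, = \, 3n-(\la_1+\mu_1+\nu_1) \, \le \, 3n,$$
so the triple is admissible. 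Since $\al[n]=\la$, $\be[n]=\mu$, and $\ga[n]=\nu$, the monotonicity $g(\al[N],\be[N],\ga[N])\le \rg(\al,\be,\ga)$ built into $(\circ)$, applied at $N=n$, gives $\rg(\al,\be,\ga)\ge g(\la,\mu,\nu)\ge \sqrt{n!}\,e^{-O(\sqrt n)}$, which is stronger than the claim.

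For the upper bound, fix an admissible triple with $a=|\al|$, $b=|\be|$, $c=|\ga|$, and $a+b+c\le 3n$. By Murnaghan stability, $\rg(\al,\be,\ga)=g(\al[N],\be[N],\ga[N])$ for the stabilization threshold $N$. Combine the three cyclic copies of the character-theoretic bound $g\le f^\la f^\mu/f^\nu$, where $f^\la=\dim\chi^\la$, to obtain
$$g(\al[N],\be[N],\ga[N])^3 \, \le \, f^{\al[N]}\,f^{\be[N]}\,f^{\ga[N]}.$$
Use the hook length formula to derive the asymptotic $f^{\al[N]}\sim N^{a}\,f^\al/a!$, and insert $f^\al\le\sqrt{a!}$. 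In the \emph{balanced} regime $a,b,c\approx n$ with first parts $\al_1,\be_1,\ga_1=O(\sqrt n)$ (where the maximum above was realised), stability holds at $N=n+O(\sqrt n)$, and the resulting bound simplifies, by Stirling, to $\sqrt{N!}=\sqrt{n!}\,e^{o(n)}$, matching the claim.

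The \emph{main obstacle} is the complementary, ``unbalanced'' regime: either one of $a,b,c$ is close to $3n$, or one of $\al_1,\be_1,\ga_1$ is comparable to $n$. Here the stabilization threshold $N$ can be $\Theta(n)$ with $N/n\not\to 1$, so crude invocation of $\sqrt{N!}$ would overshoot by a factor $e^{\Theta(n\log n)}$. To close this gap I would argue that in this regime either a triangle-type constraint forces $\rg(\al,\be,\ga)=0$, or at least one of $f^\al,f^\be,f^\ga$ is much smaller than the generic $\sqrt{a!},\sqrt{b!},\sqrt{c!}$ (because the partitions are far from VKLS), so that the cyclic product $f^{\al[N]}f^{\be[N]}f^{\ga[N]}$ still stays within $(\sqrt{n!}\,e^{O(n)})^3$. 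This case analysis, combining Brion's stability estimates with refined hook length bookkeeping, is the technical core of the proof.
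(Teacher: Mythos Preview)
Your lower bound is fine, though the paper gets there more directly: it takes $\al,\be,\ga\vdash n$ themselves and reads off $\rg(\al,\be,\ga)\ge g(\al,\be,\ga)$ from the $m=0$ term of the Bowman--De~Visscher--Orellana identity~\eqref{eq:BDO}, then invokes Theorem~\ref{t:stanley}$(\ast)$.  No stripping of first rows or VKLS shape information is needed.

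The upper bound has a genuine gap.  First, your stability claim in the ``balanced'' regime is not justified: the known threshold is $N\ge|\al|+|\be|+|\ga|$ \cite{BOR2,Val}, so even when $a=b=c=n$ one must work at $N\approx 3n$, not $N=n+O(\sqrt n)$.  Second, the asymptotic $f^{\al[N]}\sim N^a f^\al/a!$ holds for \emph{fixed} $\al$ as $N\to\infty$; here $|\al|$ and $N$ grow together, and you do not supply the uniform hook length estimate this would require.  Third, and most importantly, you explicitly leave the unbalanced regime as an unfinished case analysis.  The heuristic ``either a triangle constraint forces $\rg=0$, or the dimensions are small because the shapes are far from VKLS'' is not a proof; carrying it out would mean a genuine optimization over all $(a,b,c)$ with $a+b+c\le 3n$, and nothing in your sketch indicates how to do it.

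The paper avoids all of this by using the identity of Bowman--De~Visscher--Orellana~\cite[Cor.~4.5]{BDO},
\[
\rg(\al,\be,\ga)\;=\;\sum_{m}\;\sum_{\pi,\rho,\si}\;\sum_{\la,\mu,\nu\vdash k-2m} c^{\al}_{\nu\pi\rho}\,c^{\be}_{\mu\pi\si}\,c^{\ga}_{\la\rho\si}\,g(\la,\mu,\nu)\,,\qquad k=a+b-c\ts,
\]
which expresses $\rg$ directly as a nonnegative sum of (multi-)LR coefficients times ordinary Kronecker coefficients.  Bounding each multi-LR factor by $c^{\la}_{\al\be\ga}\le 3^{|\la|/2}$ via \cite[Thm~1.5]{PPY} and Vandermonde, and each $g(\la,\mu,\nu)\le\sqrt{(k-2m)!}$ via Theorem~\ref{t:stanley}$(\ast)$, yields immediately
\[
\rg(\al,\be,\ga)\;\le\;(3n/2)\cdot p(3n)^6\cdot 3^{3n/2}\cdot\sqrt{n!}\;=\;\sqrt{n!}\,e^{O(n)}\ts,
\]
where the $S_3$-symmetry of $\rg$ is used to arrange $c=\max(a,b,c)$, so that $k=a+b-c\le\min(a,b)\le n$.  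No stabilization threshold, no dimension asymptotics, and no case split are involved.
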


\smallskip

\subsection{Complexity}
Our final result is on complexity of computing the reduced
Kronecker coefficients.  Via reduction to LR--coefficients,
computing the reduced Kronecker coefficients is classically
$\SP$-hard, see~\cite{Nar}.  The following recent result
by Ikenmeyer, Mulmuley and Walter is a far-reaching extension:

\begin{thm}[{\cite{IMW}, cf.~$\S$\ref{ss:finrem-IMW}}] \label{t:IMW}
Computing the Kronecker coefficients
\ts $g(\la,\mu,\nu)$ \ts is \ts \emph{strongly} \ts $\SP$-hard.
\end{thm}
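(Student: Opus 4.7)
The plan is to reduce from computing Littlewood--Richardson coefficients $c^\lambda_{\mu\nu}$, which is $\SP$-hard by \cite{Nar}. In Narayanan's reduction from counting perfect matchings, the LR-instances produced have partitions whose parts are of polynomial size in the input, so LR-computation is already strongly $\SP$-hard; hence it suffices to reduce LR-computation to Kronecker-computation in a way that preserves polynomial size in the unary model.

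The bridge will be Murnaghan's stability. Given an LR-instance $(\lambda, \mu, \nu)$ with $|\lambda| = |\mu| + |\nu| = k$ encoded in unary, one takes $n := k + \lambda_1$ and outputs the Kronecker triple $(\lambda[n], \mu[n], \nu[n])$ as defined in $(\circ)$. Combining $(\circ)$ with the classical identity $\bar g(\lambda,\mu,\nu) = c^\lambda_{\mu\nu}$ valid when $|\lambda| = |\mu|+|\nu|$, one obtains
$$
g(\lambda[n],\mu[n],\nu[n]) \; = \; \bar g(\lambda,\mu,\nu) \; = \; c^\lambda_{\mu\nu}.
$$
The three Kronecker partitions each have size $n = O(k)$, so the transformation is polynomial-time even in the unary model, and an oracle for the Kronecker problem immediately returns $c^\lambda_{\mu\nu}$. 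Chaining this with Narayanan's unary reduction from $\#$-matching to LR yields the strong $\SP$-hardness of Kronecker computation.

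The main obstacle, and the step that requires genuine care, is verifying that Narayanan's LR-instances really are of unary-polynomial size. In principle his gadgets only need to encode vertex and edge multiplicities of the matching instance, which are bounded, so the resulting partition parts are polynomial in the original unary input. The Murnaghan threshold $n \geq |\lambda| + \lambda_1$ from $(\circ)$ only adds a linear blow-up and thus cannot destroy the unary-polynomial property. As a backup, should Narayanan's encoding turn out to be binary-sensitive, I would instead reduce from $\#3$DM directly to LR via the Knutson--Tao hive description, where the hive inequalities make the polynomiality of part sizes transparent, and then apply the same Murnaghan embedding to land in Kronecker.
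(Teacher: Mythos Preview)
Your approach has a genuine gap: it relies on the claim that computing $c^\lambda_{\mu\nu}$ is \emph{strongly} $\SP$-hard, but this is not known.  Narayanan's reduction in~\cite{Nar} goes through Kostka numbers and ultimately through instances whose entries are only polynomial in the \emph{binary} input size; the resulting partitions have parts of exponential magnitude, so the argument does not survive the switch to unary.  Indeed, the present paper explicitly records strong $\SP$-hardness of LR--coefficients as an open conjecture (see~$\S$\ref{ss:finrem-moral} and \cite[Conj.~8.1]{PP}), so you cannot use it as a black box.  Your backup plan via hives and $\#3$DM would, if it worked, resolve that conjecture; no such reduction is currently known.

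There is also a smaller slip: the inequality $n\ge |\lambda|+\lambda_1$ appearing in~$(\circ)$ is merely the condition for $\lambda[n]$ to be a valid partition, not the Murnaghan stability threshold.  The bound at which $g(\lambda[n],\mu[n],\nu[n])=\overline g(\lambda,\mu,\nu)$ is guaranteed is $n\ge |\lambda|+|\mu|+|\nu|$ (see~\cite{BOR2,Val}).  This is only a linear correction and would not matter if the main reduction were sound, but it should be fixed.

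For contrast, the argument in~\cite{IMW} (as summarized in~$\S$\ref{ss:finrem-IMW}) bypasses LR--coefficients entirely: it gives a parsimonious reduction from the {\sc 3-Partition} problem directly to the Kronecker vanishing problem, and {\sc 3-Partition} is strongly $\NP$-complete, so the counting version is strongly $\SP$-complete.  That is what makes Theorem~\ref{t:IMW} go through without assuming anything about LR--coefficients.
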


Here by \emph{strongly $\SP$-hard} we mean $\SP$-hard when the
input $(\la,\mu,\nu)$ is given in unary.  In other words,
the input size of the problem is the total number
of squares in the three Young diagrams.  The theorem is in sharp
contrast with computing \ts
$\chi^{(n-k,k)}[\la]$ \ts which is
$\SP$-complete but not strongly $\SP$-complete, see~\cite[$\S$7]{PP}.

\begin{thm}\label{t:SP-red}
Computing the reduced Kronecker coefficients \ts $\rg(\al,\be,\ga)$
\ts is \ts \emph{strongly} \ts $\SP$-hard.
\end{thm}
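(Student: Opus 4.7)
\medskip

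The plan is to reduce computing $g(\la,\mu,\nu)$---strongly $\SP$-hard by Theorem~\ref{t:IMW}---to computing $\rg(\al,\be,\ga)$, with all oracle inputs of polynomial size so that the unary encoding is preserved. The main tool is Murnaghan's stability $(\circ)$: for any $\al,\be,\ga$, the sequence $g(\al[N],\be[N],\ga[N])$ is non-decreasing in $N$ and stabilizes to $\rg(\al,\be,\ga)$ at an explicit threshold $N_0 = N_0(\al,\be,\ga)$ which is polynomial in $|\al|+|\be|+|\ga|$, as established by Brion, Vallejo, and Stembridge.

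Given an input $\la,\mu,\nu \vdash n$ to the Kronecker problem, set $\al := (\la_2,\la_3,\ldots)$, $\be := (\mu_2,\mu_3,\ldots)$, $\ga := (\nu_2,\nu_3,\ldots)$, so that $\la = \al[n]$, $\mu = \be[n]$, and $\nu = \ga[n]$. When $n \ge N_0(\al,\be,\ga)$---the ``long first rows'' regime---one has $g(\la,\mu,\nu) = \rg(\al,\be,\ga)$, so a single oracle call on an input of total size $\le 3n$ suffices and the reduction is immediate. The first step of the plan is therefore to arrange the IMW construction so that its output Kronecker triples already lie in this stable range: either by inspecting the IMW proof directly, or by pre-composing it with a simple padding operation that attaches a suitable prefix to each of $\la,\mu,\nu$ while keeping the composite reduction polynomial.

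For Kronecker instances that fall outside the stable range, I would combine monotonicity with the telescoping identity
$$
g(\al[n],\be[n],\ga[n]) \,=\, \rg(\al,\be,\ga) \,-\, \sum_{k=n}^{N_0-1} \Bigl[\, g(\al[k+1],\be[k+1],\ga[k+1]) \,-\, g(\al[k],\be[k],\ga[k]) \,\Bigr].
$$
By the stability-of-Kronecker machinery of Briand--Orellana--Rosas and of Sam--Snowden, each non-negative integer increment in the sum admits an expression as a non-negative combination of reduced Kronecker coefficients at strictly smaller partitions; recursion on $|\al|+|\be|+|\ga|$ then terminates after $O(n)$ rounds with $\mathrm{poly}(n)$ oracle calls to $\rg$ on inputs of size $O(n)$. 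The main technical obstacle will be producing such an explicit inductive formula for the increments with polynomially-bounded coefficients; should no off-the-shelf version be available, a symmetric-function argument expanding $s_{\al[k+1]}-s_{\al[k]}$ in a Murnaghan-stable basis---peeling off one row at a time---would serve as a backup derivation and still yield the required polynomial-time reduction.
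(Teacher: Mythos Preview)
Your proposal has a genuine gap: none of the three routes you sketch is actually carried out. Inspecting the IMW construction to verify that its output triples already lie in the stable range $\la_1+\mu_1+\nu_1 \ge 2n$ is left undone. The ``padding'' alternative does not preserve the value of $g(\la,\mu,\nu)$: monotonicity gives only an inequality, so attaching longer first rows drives you to $\rg(\al,\be,\ga)$, not to the unstable $g(\la,\mu,\nu)$ you need to compute. Your fallback telescoping argument then requires expressing each increment $g(\al[k{+}1],\be[k{+}1],\ga[k{+}1]) - g(\al[k],\be[k],\ga[k])$ as a polynomial-size nonnegative combination of reduced Kronecker coefficients at strictly smaller partitions; no such formula appears in the Briand--Orellana--Rosas or Sam--Snowden papers you invoke, and you yourself flag producing one as ``the main technical obstacle.'' A plan whose central lemma is still to be discovered is not yet a proof, and your backup symmetric-function sketch is equally unspecified.

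The paper sidesteps all of this with a single identity already in the literature \cite[Thm~1.1]{BOR2}:
\[
g(\la,\mu,\nu) \;=\; \sum_{i=1}^{\ell(\mu)\,\ell(\nu)} (-1)^{i}\;\rg\bigl(\la^{\langle i\rangle},\,\wt\mu,\,\wt\nu\bigr),
\]
where $\wt\mu=(\mu_2,\mu_3,\ldots)$ and $\la^{\langle i\rangle}$ is obtained from $\la$ by deleting the $i$-th part and adding~$1$ to each of the first $i{-}1$ parts. This is exactly an ``unstable Kronecker equals signed sum of reduced Kroneckers'' formula of the type you were hoping to manufacture, but it is off-the-shelf, has at most $\ell(\mu)\ell(\nu)\le n^2$ terms, and every argument has size $O(n)$. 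One line of bookkeeping then gives the polynomial Turing reduction from Theorem~\ref{t:IMW}.
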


Let us mention that the problem of computing the
(reduced) Kronecker coefficients is not known to be in~$\SP$, see~\cite{PP}.
In fact, finding a combinatorial interpretation for (reduced)
Kronecker coefficients is a classical open problem~\cite[Prob.~10]{Sta-OP}.
Note also that Theorem~\ref{t:SP-red} is stronger than Theorem~\ref{t:IMW},
since in the limit~$(\circ)$ it suffices to take \ts
$n  \ts \ge \ts |\al| \ts + \ts |\be| \ts + \ts |\ga|$,
see~\cite{BOR2,Val}. Indeed, this shows that the
reduced Kronecker coefficient problem is a subset
of instances of the usual Kronecker coefficient problem 
(cf., however,~$\S$\ref{ss:finrem-Iken}).

\medskip

\section{Disproof of the saturation property}

\subsection{Preliminaries}
We assume the reader is familiar with basic results and standard
notations in Algebraic Combinatorics, see~\cite{Sag,EC2}.
We also need the following two results on Kronecker coefficients.

\begin{lemma}[Symmetries] \label{l:sym}
For every \ts $\la,\mu,\nu \vdash n$, we have:
$$
g(\la,\mu,\nu)\. = \. g(\la',\mu',\nu)\. = \. g(\mu,\la,\nu)\. = \. g(\la,\nu,\mu)\ts.
$$
\end{lemma}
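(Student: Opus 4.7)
The plan is to read all four symmetries directly off the inner-product formula in the definition of $g(\la,\mu,\nu)$. Since
$$
g(\la,\mu,\nu) \,=\, \frac{1}{n!}\sum_{\si\in S_n} \chi^\la(\si)\.\chi^\mu(\si)\.\chi^\nu(\si)\ts,
$$
the summand is a product of three scalar values, so any permutation of $(\la,\mu,\nu)$ leaves the sum unchanged. This immediately yields $g(\la,\mu,\nu) = g(\mu,\la,\nu) = g(\la,\nu,\mu)$ (and in fact the full $S_3$-symmetry in the three arguments); no further work is needed for these identities.

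For the conjugation symmetry $g(\la,\mu,\nu) = g(\la',\mu',\nu)$, I would invoke the standard fact that conjugating a partition corresponds to tensoring the Specht module with the sign representation: $\chi^{\la'}(\si) = \ep(\si)\.\chi^\la(\si)$, where $\ep$ denotes the sign character of $S_n$. Substituting this twice into the inner product gives
$$
\chi^{\la'}(\si)\.\chi^{\mu'}(\si)\.\chi^\nu(\si) \,=\, \ep(\si)^2\.\chi^\la(\si)\.\chi^\mu(\si)\.\chi^\nu(\si) \,=\, \chi^\la(\si)\.\chi^\mu(\si)\.\chi^\nu(\si)\ts,
$$
because $\ep(\si)^2 = 1$ for every $\si\in S_n$. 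Summing over $\si$ then yields $g(\la',\mu',\nu) = g(\la,\mu,\nu)$. Combined with the $S_3$-symmetry of the first part, this also gives $g(\la,\mu,\nu) = g(\la,\mu',\nu') = g(\la',\mu,\nu')$ (though only the stated version is needed).

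There is no genuine obstacle here: the identities are classical and follow from two elementary facts about characters of $S_n$ (symmetry of the product under permutation of factors, and $\chi^{\la'} = \ep\cdot\chi^\la$). The only minor point worth flagging is that there is no symmetry sending $(\la,\mu,\nu)$ to, say, $(\la',\mu,\nu)$; a single conjugation would introduce an uncanceled factor of $\ep(\si)$ and break the identity, which is why the statement conjugates partitions in pairs.
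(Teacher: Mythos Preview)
Your argument is correct and is exactly the standard one: the $S_3$-symmetry is immediate from commutativity of the product of character values, and the conjugation-in-pairs symmetry follows from $\chi^{\la'}=\ep\cdot\chi^\la$ together with $\ep^2=1$. The paper itself states this lemma without proof, treating it as a well-known fact, so there is nothing to compare against; your write-up would serve perfectly well as the omitted justification.
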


\begin{lemma}[Semigroup property~\cite{CHM,Manivel}]\label{l:semi}
Suppose $\al,\be,\ga \vdash m$, such that \ts $g(\al,\be,\ga)> 0$.
Then, for all partitions \ts $\la,\mu,\nu \vdash n$, we have:
$$g(\la+\al,\mu+\be,\nu+\ga)\. \geq \. g(\la,\mu,\nu)\ts.
$$
\end{lemma}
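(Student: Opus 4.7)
The plan is to realize the Kronecker coefficient $g(\la,\mu,\nu)$ as the dimension of a space of highest weight vectors inside a polynomial ring that carries a $\GL_a \times \GL_b \times \GL_c$ action by algebra automorphisms; then multiplication by a nonzero highest weight vector of weight $(\al,\be,\ga)$ will yield an injection between the relevant highest weight spaces, giving the desired inequality at the level of dimensions.

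Concretely, I would fix integers $a,b,c$ large enough that $\la+\al$, $\mu+\be$, $\nu+\ga$ have at most $a$, $b$, $c$ parts respectively, and set $W := \cc^a \otimes \cc^b \otimes \cc^c$, viewed as a representation of $G := \GL_a \times \GL_b \times \GL_c$. The ``triple Cauchy'' decomposition, a standard consequence of Schur--Weyl duality, gives for every $k \ge 0$
$$
\mathrm{Sym}^k(W) \; \cong \; \bigoplus_{\pi,\sigma,\tau \vdash k} \; g(\pi,\sigma,\tau) \;\. V^{(a)}_{\pi} \otimes V^{(b)}_{\sigma} \otimes V^{(c)}_{\tau},
$$
where $V^{(a)}_{\pi}$ is the irreducible polynomial $\GL_a$-module of highest weight $\pi$ (the summand vanishing if $\ell(\pi) > a$, and similarly for the other factors). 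Consequently $g(\la,\mu,\nu) = \dim H(\la,\mu,\nu)$, where $H(\la,\mu,\nu) \subseteq \mathrm{Sym}^n(W)$ denotes the space of vectors of weight $(\la,\mu,\nu)$ under the maximal torus that are annihilated by the unipotent radical $N$ of a fixed Borel in $G$.

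The key observation is that $G$, and therefore $N$, acts on the graded ring $\mathrm{Sym}(W) = \bigoplus_{k \ge 0} \mathrm{Sym}^k(W)$ by algebra automorphisms. For $v \in H(\la,\mu,\nu)$, $w \in H(\al,\be,\ga)$, and any $u \in N$, one then has $u \cdot (vw) = (u \cdot v)(u \cdot w) = vw$, while the torus weights add, so $vw \in H(\la+\al,\mu+\be,\nu+\ga)$. By hypothesis $g(\al,\be,\ga) > 0$, so a nonzero $w \in H(\al,\be,\ga)$ exists; since $\mathrm{Sym}(W)$ is a polynomial ring, hence an integral domain, the linear map $v \mapsto vw$ from $H(\la,\mu,\nu)$ into $H(\la+\al,\mu+\be,\nu+\ga)$ is injective, and comparing dimensions gives the claimed inequality. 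The only real bookkeeping is to choose $a,b,c$ large enough at the outset to accommodate all partitions on both sides simultaneously, which is immediate from $\ell(\la+\al) = \max(\ell(\la),\ell(\al))$ and its analogues; there is no serious obstacle, and the whole argument is essentially a one-line consequence of the polynomial-ring structure on $\mathrm{Sym}(W)$.
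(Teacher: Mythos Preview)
Your argument is correct. The paper itself does not prove this lemma; it is stated with a citation to~\cite{CHM,Manivel} and used as a black box. Your proof is precisely the standard one from those references: realize $g(\la,\mu,\nu)$ as the dimension of the $(\la,\mu,\nu)$-highest-weight space in $\mathrm{Sym}(\cc^a\otimes\cc^b\otimes\cc^c)$ via Schur--Weyl duality, and use that multiplication by a nonzero highest-weight vector is injective in a polynomial ring. There is nothing to add.
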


This result is crucial for understanding of reduced Kronecker coefficients.  First,
since \ts $g(1,1,1)=1$, we conclude that the sequence
\ts $\bigl\{g(\al[n],\be[n],\ga[n])\bigr\}$ \ts is weakly increasing with~$n$.
Similarly,  the sequence \ts $\bigl\{g(N\la,N\mu,N\nu)\bigr\}$  \ts is
weakly increasing with~$N$ if $g(\la,\mu,\nu)>0$.

\smallskip

Let \ts $\ell(\la)$ be the \emph{number of parts} of the partition~$\la$, and \ts
$d(\la) := \max\{k: \la_k \geq k\}$ \ts be the \emph{Durfee size}.

\begin{lemma}[\cite{Dvir}]\label{l:dvir}
Let $\la,\mu,\nu \vdash n$ be such that $g(\la,\mu,\nu) >0$. Then \ts
$d(\la) \ts \le \ts 2\ts d(\mu) \ts d(\nu)$.
\end{lemma}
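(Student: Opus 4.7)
The plan is to derive the bound from Dvir's classical length inequality \ts $\ell(\la) \le \ell(\mu)\ell(\nu)$, which holds whenever \ts $g(\la,\mu,\nu) > 0$, combined with the four-fold symmetry of Lemma~\ref{l:sym}. The classical inequality itself is a quick Schur--Weyl duality argument: setting \ts $a = \ell(\mu)$, $b = \ell(\nu)$, and identifying \ts $\cc^{ab} = \cc^a \otimes \cc^b$, the positivity \ts $g(\la,\mu,\nu) > 0$ \ts implies that the polynomial \ts $\GL_{ab}$-module \ts $V^\la_{\GL_{ab}}$ \ts contains the external tensor product \ts $V^\mu_{\GL_a} \otimes V^\nu_{\GL_b}$ \ts upon restriction to \ts $\GL_a \times \GL_b \hookrightarrow \GL_{ab}$; in particular this module is nonzero, forcing \ts $\ell(\la) \le ab$.

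Applying the bound to each of the four triples \ts $(\la,\mu,\nu)$,\ts $(\la',\mu',\nu)$,\ts $(\la',\mu,\nu')$,\ts $(\la,\mu',\nu')$ \ts allowed by Lemma~\ref{l:sym} yields
$$
\ell(\la) \le \ell(\mu)\ell(\nu), \quad \la_1 \le \mu_1\nu_1, \quad \la_1 \le \mu_1\ell(\nu), \quad \ell(\la) \le \ell(\mu)\nu_1 \ts.
$$
Combined with the trivial inequality \ts $d(\la) \le \min(\ell(\la), \la_1)$, these give some control on \ts $d(\la)$ \ts in terms of row and column lengths of \ts $\mu$ \ts and \ts $\nu$; however, \ts $\ell(\mu)$ \ts and \ts $\mu_1$ \ts can be arbitrarily large while \ts $d(\mu)$ \ts is fixed (as for hook partitions), so a purely elementary combination cannot yield a Durfee-vs-Durfee bound.

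To bridge this gap I would refine the Schur--Weyl argument by replacing each of \ts $\mu, \nu$ \ts by its \emph{Durfee L-shape}: the union of the top \ts $d(\mu)$ \ts rows and the leftmost \ts $d(\mu)$ \ts columns, which covers \ts $\mu$ \ts entirely. Each arm of this L-shape has width or height exactly \ts $d(\mu)$, so an arm-by-arm version of Dvir's length bound should produce two Durfee-level inequalities whose combination gives \ts $d(\la) \le 2\ts d(\mu)\ts d(\nu)$; the factor of $2$ arises because the Durfee square is counted in both the row arm and the column arm of the L-shape.

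The principal obstacle is that the semigroup property (Lemma~\ref{l:semi}) only assembles positive Kronecker coefficients and cannot be used to decompose \ts $\mu$ \ts or \ts $\nu$ \ts into their L-shape pieces at the level of positivity. Hence the L-shape argument must be carried out at the level of Weyl modules or combinatorial witnesses (say, lattice words or Berenstein--Zelevinsky-type hives adapted to Kronecker), tracking the branching multiplicities through the Durfee truncations.
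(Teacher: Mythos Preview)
The paper does not prove Lemma~\ref{l:dvir}; it is stated with a bare citation to~\cite{Dvir} and used as a black box. So there is no ``paper's own proof'' to compare against, and your write-up is really an attempt to supply a proof the authors chose to import from the literature.

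As a proof, your proposal is incomplete, and you say so yourself: after observing (correctly) that no combination of the row/column length bounds can control $d(\la)$ in terms of $d(\mu),d(\nu)$ alone, you sketch an ``L-shape'' refinement but do not carry it out, noting that the semigroup property goes the wrong way and that one would need to track branching at the level of Weyl modules or some combinatorial witness. That last paragraph is an honest description of a gap, not a proof. If you want to fill it, the argument in~\cite{Dvir} does not proceed via the length bound plus symmetries; it works directly with the recursive (Littlewood--Richardson type) description of constituents of $\chi^\mu\chi^\nu$ and controls the Durfee square that way. Trying to bootstrap from $\ell(\la)\le\ell(\mu)\ell(\nu)$ alone is a dead end for exactly the hook-shape reason you identified.

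A small correctness issue: your displayed inequalities do not match the triples you list. From $(\la',\mu',\nu)$ one gets $\la_1\le \mu_1\ts\ell(\nu)$, from $(\la',\mu,\nu')$ one gets $\la_1\le \ell(\mu)\ts\nu_1$, and from $(\la,\mu',\nu')$ one gets $\ell(\la)\le \mu_1\ts\nu_1$. In particular, your second inequality $\la_1\le \mu_1\nu_1$ would require $g(\la',\mu',\nu')>0$, which is \emph{not} one of the symmetries in Lemma~\ref{l:sym} (only an even number of the three partitions may be conjugated). This does not affect your overall conclusion, since you already argue that these inequalities are insufficient, but the list as written is wrong.
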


\nin
The following argument gives a blueprint for the proof of Theorem~\ref{t:sat}.

\begin{prop}  \label{p:ex}
Let $\al=1^5$, $\ga=3^2$.  Then \ts
$\rg(\al,\al,\ga) = 0$, but \. $\rg(2\al,2\al,2\ga)>0$.
\end{prop}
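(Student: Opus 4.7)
The proposition splits into a vanishing claim and a positivity claim, and I would attack the two halves with rather different tools.

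The vanishing $\rg(\al,\al,\ga) = 0$ is structural and falls straight out of Dvir's inequality together with the $S_3$--symmetry.  For every stable $n\ge 9$, set $\la := \al[n] = (n{-}5,1^5)$ and $\nu := \ga[n] = (n{-}6,3,3)$.  Reading off the Young diagrams one gets $d(\la) = 1$ (because $\la_2 = 1$) and $d(\nu) = 3$ (because $\nu_3 = 3$).  Lemma~\ref{l:sym} gives $g(\la,\la,\nu) = g(\nu,\la,\la)$, and then Lemma~\ref{l:dvir} forces any positive such coefficient to satisfy $d(\nu) \le 2\ts d(\la)^2 = 2$, contradicting $d(\nu)=3$.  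Hence $g(\la,\la,\nu) = 0$ for every stable $n$, and $(\circ)$ yields $\rg(\al,\al,\ga) = 0$.

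For the positivity $\rg(2\al,2\al,2\ga) > 0$, now set $\la := 2\al[n] = (n{-}10,2^5)$ and $\nu := 2\ga[n] = (n{-}12,6,6)$, valid for $n\ge 18$.  The sequence $n\mapsto g(\la,\la,\nu)$ is weakly increasing with limit $\rg(2\al,2\al,2\ga)$ by the monotonicity noted after Lemma~\ref{l:semi}, so it is enough to exhibit one $n\ge 18$ with $g(\la,\la,\nu) > 0$.  Observe first that Dvir is no longer an obstruction: one now has $d(\la)=2$, $d(\nu)=3$, and $d(\nu) \le 2\ts d(\la)^2 = 8$ comfortably.  The naive doubling $2\al = 1^5 + 1^5$, $2\ga = 3^2 + 3^2$ is useless, since the resulting reduced coefficient vanishes by the first half; any semigroup argument therefore needs a genuinely different splitting.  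My plan is to exhibit the witness by a direct character evaluation at the smallest stable value $n = 18$, computing $\langle \chi^{\la}\ts\chi^{\la},\ts\chi^{\nu}\rangle$ using Murnaghan--Nakayama (or a symmetric functions package), with a back--up of hunting for a nontrivial decomposition $(2\al,2\al,2\ga) = (\al_1,\be_1,\ga_1) + (\al_2,\be_2,\ga_2)$ whose summands have positive reduced Kronecker coefficients and invoking the reduced analogue of Lemma~\ref{l:semi}.

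The main obstacle is clearly the positivity half.  In the absence of a combinatorial interpretation for reduced Kronecker coefficients, one cannot read a witness off the shapes, and positivity must come from an explicit character computation or from a delicate semigroup construction.  The real challenge is to phrase this input cleanly enough that the same method extends to the family $(1^{k^2-1},1^{k^2-1},k^{k-1})$ of Theorem~\ref{t:sat}, rather than remaining a one--off numerical verification.
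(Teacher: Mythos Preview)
Your approach matches the paper's exactly: vanishing via Dvir's inequality on $d(\al[n])=1$ versus $d(\ga[n])=3$, and positivity via a direct character computation at $n=18$, where the paper reports $g(8\ts 2^5,\,8\ts 2^5,\,6^3)=8$ (and notes $\rg(2\al,2\al,2\ga)=12$).  Your closing concern is also on the mark: for the infinite family $(1^{k^2-1},1^{k^2-1},k^{k-1})$ the paper abandons direct calculation and instead feeds the self-conjugate positivity result $g(k^k,k^k,k^k)>0$ through the semigroup property and the conjugation symmetry.
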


\begin{proof}  
First, let us show that \ts $g(\al[n],\al[n],\ga[n])=0$ \ts for all $n \geq 9$.
Indeed, we have \ts $d(\al[n])=1$, and \ts $d(\ga[n]) =3 > 2 \ts d(\al[n])^2=2$,
and the claim follows from Lemma~\ref{l:dvir}. \ts
On the other hand, a direct calculation shows that \ts
$g(2\al\ts [18],2\al\ts [18]), 2\ga\ts [18]) = g(8\ts 2^5, 8\ts 2^5, 6^3) =8$,
which implies \ts $\rg(2\al,2\al,2\ga) \geq 8$.\footnote{In fact, a longer direct calculation gives \ts $\rg(2\al,2\al,2\ga)=12$.  }
This contradicts the saturation property in
Conjecture~\ref{conj:sat} for \ts $N=2$.
\end{proof}

\subsection{Proof of Theorem~\ref{t:sat}}
We prove the first statement of the theorem. Let $k\ge 3$, and let \ts $\al =\bigl(1^{k^2-1}\bigr)$,
\ts $\ga = \bigl(k^{k-1}\bigr)$ \ts be as in the theorem.
%
Since \ts $d(\al[n])=1$ \ts and \ts
$d(\ga[n]) = k$ \ts for all~$n\ge k^2$, we have \ts
$2\ts d(\al[n])^2 = 2 < d(\ga[n])=k$. Thus, we have
\ts $\rg(\al,\al,\ga)=0$ \ts
by Lemma~\ref{l:dvir}.

\smallskip

\begin{lemma}[\cite{BB}]
Let \ts $\la=\la'$ \ts be a self-conjugate partition.  Then \ts
$g(\la,\la,\la)>0$. \label{l:BB}
\end{lemma}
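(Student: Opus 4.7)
My plan is to prove $g(\la,\la,\la) > 0$ for every self-conjugate~$\la$ by a character-theoretic reduction to the alternating subgroup, followed by exhibiting an explicit equivariant witness.

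The starting point is the identity $\chi^{\la'} = \ve\cdot\chi^\la$, where $\ve$ denotes the sign character of~$S_n$. Combined with $\la = \la'$, this forces $\chi^\la(\sigma) = 0$ on every odd $\sigma \in S_n$. Substituting into the Kronecker character formula collapses the sum to~$A_n$:
$$
g(\la,\la,\la) \. = \. \frac{1}{n!}\sum_{\sigma \in S_n} \chi^\la(\sigma)^3 \. = \. \frac{1}{n!}\sum_{\sigma \in A_n} \chi^\la(\sigma)^3\ts.
$$

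For the next step I would invoke the classical branching rule: for $\la = \la'$, the restriction $\chi^\la|_{A_n} = \phi_+ + \phi_-$ splits into two distinct irreducible $A_n$-characters, swapped by conjugation with any fixed odd permutation; moreover, these $\phi_{\pm}$ do not appear in the restriction of any other $\chi^\mu$ with $\mu \vdash n$. Consequently, $g(\la,\la,\la)$ equals the multiplicity of $\phi_+$ in the $A_n$-tensor square $(\phi_+ + \phi_-)^{\otimes 2}$, which rearranges to
$$
g(\la,\la,\la) \. = \. \langle \phi_+^2, \phi_+\rangle_{A_n} \. + \. 2\ts \langle \phi_+ \phi_-, \phi_+\rangle_{A_n} \. + \. \langle \phi_-^2, \phi_+\rangle_{A_n}\ts,
$$
a sum of three nonnegative integer multiplicities.

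The main obstacle --- and the heart of the proof --- is to show strict positivity of this sum. I would attempt this by constructing a nonzero $S_n$-invariant in $(V^\la)^{\otimes 3}$ directly. The natural ingredient is the canonical map $\iota\colon V^\la \to V^\la$ induced by transposition of tableaux, which is $\ve$-intertwining with the $S_n$-action (i.e.\ $\iota(\sigma\cdot v) = \ve(\sigma)\ts\sigma\cdot\iota(v)$) and which exists precisely because $\la = \la'$. Combined with the $S_n$-invariant self-duality form $B$ on~$V^\la$, one obtains candidate trilinear forms on $V^\la$ whose sign twists cancel; verifying that some such form is both $S_n$-invariant and nonzero (the latter by explicit evaluation on a distinguished self-transpose standard tableau) would complete the proof. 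Small base cases such as $\la = (1), (2,1), (2,2), (3,2,1)$ can be confirmed by direct character computation. The subtle point --- and, I expect, the crux of the argument in~\cite{BB} --- is ensuring nonvanishing: naive bilinear constructions using only $B$ and $\iota$ collapse to scalar multiples of~$B$, so a more careful ingredient (for instance, a specific highest-weight vector, or a Littlewood-type self-conjugate Schur identity) is needed to pin down a genuinely nontrivial invariant.
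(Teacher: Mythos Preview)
The paper does not prove this lemma; it is quoted from~\cite{BB} and used as a black box. So the relevant comparison is with the Bessenrodt--Behns argument, not with anything in the present paper.

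Your reduction is correct and is in fact the same opening move as in~\cite{BB}: for $\la=\la'$ the character $\chi^\la$ vanishes on odd permutations, so the restriction to~$A_n$ splits as $\phi_+ + \phi_-$, and $g(\la,\la,\la)$ becomes a sum of nonnegative $A_n$-multiplicities. The problem is that you stop exactly at the point where the actual work begins. You acknowledge this yourself: the construction via the self-duality form~$B$ and the sign-twisting involution~$\iota$ produces only degenerate (essentially bilinear) invariants, and you do not supply the ``more careful ingredient'' that would finish the job. As written, the proposal is not a proof but an outline with its central step missing.

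For comparison, the argument in~\cite{BB} does not attempt to build an explicit invariant vector. Instead it stays on the character side and exploits the one place where $\phi_+$ and $\phi_-$ differ: the split $A_n$-class of cycle type given by the principal hook lengths $(2\la_1-1,\,2\la_2-3,\ldots)$ of~$\la$. On that class the values of $\phi_\pm$ are known explicitly by the classical Frobenius formula, and a short computation with those values forces at least one of the multiplicities $\langle\phi_\pm^2,\phi_\pm\rangle_{A_n}$, $\langle\phi_+\phi_-,\phi_\pm\rangle_{A_n}$ to be positive. If you want to complete your write-up, that explicit split-class calculation is the missing ingredient; the invariant-vector route you sketch, while appealing, does not seem to lead anywhere without substantially new ideas.
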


\smallskip

\nin
By Lemma~\ref{l:BB}, the symmetry and semigroup properties
(Lemma~\ref{l:sym} and~\ref{l:semi}),
we have:
$$\aligned
\rg(k \al, k\al, k\ga) \. & = \.  \rg\bigl(k^{k^2-1}, k^{k^2-1}, (k^2)^{k-1}\bigr) \. \geq \.
g\bigl(k^{k^2-1}[k^3], k^{k^2-1}[k^3], (k^2)^{k-1}[k^3]\bigr) \\
& \geq \. g\bigl(k^{k^2},k^{k^2},(k^2)^k\bigr)  \. = \.
g\bigl((k^2)^k, (k^2)^k, (k^2)^k\bigr) \. \geq \. g\bigl(k^k,k^k,k^k\bigr) \. > \. 0\ts.
\endaligned
$$
This contradicts the saturation property in Conjecture~\ref{conj:sat} for $N=k$,
and proves the first part of the theorem. \ts
For the second part, we need the following more technical result:

\begin{lemma}[{\cite[Thm~1.10]{IP}}]\label{l:IP}
Let \ts $\cX:= \{1, \. 1^2, \. 1^4, \. 1^6, \. 2\ts 1, \. 3\ts 1\}$,
and let partition \ts $\nu \notin \cX$.  Denote \ts $\ell := \max\{\ell(\nu)+1,9\}$,
and suppose \ts $r >  3\ell^{3/2}$, \ts $s \geq 3 \ell^2$,
and \ts $|\nu| \leq rs/6$. Then \ts $g(s^r, s^r,\nu[rs])>0$.
\end{lemma}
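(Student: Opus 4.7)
The plan is to prove positivity of $g(s^r, s^r, \nu[rs])$ by iterated semigroup reduction (Lemma~\ref{l:semi}) together with a base-case analysis at the smallest admissible rectangles, with the excluded set $\cX$ absorbing the genuine obstructions.

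The main reduction is a \emph{column peeling} step. As partitions one has $(s^r) = ((s-1)^r) + (1^r)$ componentwise, and $g(1^r, 1^r, (r)) = 1$ because $\chi^{1^r}$ is the sign character of $S_r$, whose square is trivial. Moreover $\nu[rs] - (r,0,0,\ldots) = \nu[r(s-1)]$ as partitions, an identity valid whenever $rs - |\nu| \geq r$ (automatic from $|\nu| \leq rs/6$ and $s \geq 2$). Applying Lemma~\ref{l:semi} yields
\[
g\bigl(s^r, s^r, \nu[rs]\bigr) \. \geq \. g\bigl((s-1)^r, (s-1)^r, \nu[r(s-1)]\bigr)\ts.
\]
The conjugation symmetry $g(s^r, s^r, \nu[rs]) = g(r^s, r^s, \nu[rs])$ from Lemma~\ref{l:sym} then enables a symmetric \emph{row peeling} that reduces $r$ to $r-1$. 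Iterating these two reductions drives $(r, s)$ down to a single base rectangle at the threshold $(r_0, s_0)$ with $r_0 \approx 3\ell^{3/2}$, $s_0 \approx 3\ell^2$.

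At the base, I would exhibit a self-conjugate partition $\lambda = \lambda'$ of size at most $r_0 s_0$ which is contained (componentwise) in both $s_0^{r_0}$ and $\nu[r_0 s_0]$. By Lemma~\ref{l:BB} we have $g(\lambda, \lambda, \lambda) > 0$, and one further application of Lemma~\ref{l:semi} to the remainder triple $\bigl(s_0^{r_0} - \lambda,\ s_0^{r_0} - \lambda,\ \nu[r_0 s_0] - \lambda\bigr)$ yields the desired positivity, provided the remainder is itself Kronecker-positive. The bound $|\nu| \leq rs/6$ guarantees that $\nu[r_0 s_0]$ has an overwhelmingly long first row, leaving generous room to fit $\lambda$, and the quantitative thresholds $r > 3\ell^{3/2}$ and $s \geq 3\ell^2$ are calibrated to ensure that such a $\lambda$ exists, with the remainder triple falling into an easily handled regime (for instance, built from a few canonical positive triples supplied by the LR rule, which trivially embed into Kronecker via the partitions $\alpha[n]$).

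The main obstacle is the explicit extraction of the excluded set $\cX = \{1, 1^2, 1^4, 1^6, 21, 31\}$. These are precisely the small partitions for which no suitable dominating self-conjugate $\lambda$ can be built inside $\nu[r_0 s_0]$, and for which Dvir's inequality (Lemma~\ref{l:dvir}), applied with $d(s^r) = 1$-style degeneracies after peeling, forces $g(s^r, s^r, \nu[rs]) = 0$ throughout the admissible range. The striking structure of $\cX$ — only even-length columns $1^{2k}$ for $k \leq 3$, together with $21$ and $31$ — reflects parity constraints on self-conjugate partitions whose Durfee square matches the diagonal of $\nu[rs]$, and nailing down this list by a finite but intricate case check is the technically delicate core of the argument.
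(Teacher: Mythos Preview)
This lemma is not proved in the present paper at all; it is quoted as \cite[Thm~1.10]{IP} and used as a black box in the second half of the proof of Theorem~\ref{t:sat}. So there is no in-paper argument to compare your sketch against. That said, your outline has real gaps relative to what a proof of this statement actually requires.

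The peeling step is fine: $g(s^r,s^r,\nu[rs])\ge g((s-1)^r,(s-1)^r,\nu[r(s-1)])$ via Lemma~\ref{l:semi} with the triple $(1^r,1^r,(r))$, and likewise for rows after conjugating. But the reduction does \emph{not} land you at a single base rectangle. The hypothesis $|\nu|\le rs/6$ tightens as $r,s$ shrink, so for $|\nu|$ near the allowed maximum you cannot peel down to $(r_0,s_0)\approx(3\ell^{3/2},3\ell^2)$; the ``base case'' is still an infinite family parametrized by~$\nu$, and this is where the substance of \cite{IP} lies.

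Your proposed base case is also circular. You want to choose a self-conjugate $\lambda$, invoke Lemma~\ref{l:BB} for $g(\lambda,\lambda,\lambda)>0$, and then apply Lemma~\ref{l:semi} to the remainder $(s_0^{r_0}-\lambda,\,s_0^{r_0}-\lambda,\,\nu[r_0s_0]-\lambda)$. But Lemma~\ref{l:semi} needs that remainder triple to already be Kronecker-positive, which is precisely the kind of assertion you are trying to establish; ``falls into an easily handled regime'' is where the whole proof is hiding. Finally, your account of the excluded set $\cX$ is off: $d(s^r)=\min(r,s)$ is large in this regime, so Dvir's Lemma~\ref{l:dvir} gives no obstruction, and the six partitions in $\cX$ are not explained by Durfee-size constraints. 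They are the genuine small exceptions isolated by the explicit construction in \cite{IP}, not by any blanket inequality.
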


We construct the counterexample based on Lemma~\ref{l:IP}. For a partition~$\ga$,
let \ts $\ell:=\max\{\ell(\ga)+1,\. 9\}$ as in the lemma.  Let
\ts $b \ts \geq \ts \max\bigl\{ 3\ell^{3/2}, \, |\ga|\ts /\ts (6\sqrt{d(\ga[n])/2}-6) \bigr\}$.
Since \ts $\ga_2\ge 3$, we have \ts $d(\ga[n])\ge 3$. Thus, there exists at
least one \ts $a\ge 1$, such that \ts
$|\ga|/(6b) \ts \le  a < \sqrt{d(\ga[n])/2}$.  Let us show now that \ts $(a,b)$ \ts is a pair
as in the theorem.

Take \ts $\al:=(a^b)$. Since \ts $d(\al[n]) \leq a$,
we have \ts
$2\ts d(\al[n])^2 \ts \leq \ts 2\ts a^2 \ts < \ts d(\ga[n])$.
Thus, we have \ts $\rg(\al,\al,\ga)=0$ \ts by Lemma~\ref{l:dvir}.

On the other hand, let $N \geq 3\ell^2/a$,\, $\nu:=N\ga$, \ts $r := b+1$, and \ts $s := Na$.
Then \ts $|\nu| \ts \leq \ts Nab/6  \ts < \ts  rs/6$, \ts $r>3\ell^{3/2}$, and \ts
$s =Na \geq 3\ell^2$, by construction. Since $\nu \not\in \cX$  for all $N>1$,
the conditions of Lemma~\ref{l:IP} are satisfied.  We conclude:
$$
\rg(N\al, N\al, N\ga) \. = \.
\rg\bigl(Na^b, Na^b, N\ga\bigr) \. \geq \. g\bigl(s^{b+1}, s^{b+1}, N\ga\ts [rs]\bigr)
\. = \. g\bigl(s^r,s^r,\nu[rs]\bigr) \.  > \.0 \ts,
$$
which implies that \ts $(\al,\al,\ga)$ \ts is a counterexample to the
saturation property.  Since the construction works for all \ts $b$ \ts large
enough as above, this proves the second part of the theorem. \ $\sq$

%
%

\medskip

\section{Bounds and complexity via identities}

\subsection{Proof of Theorem~\ref{t:kron-max}}

We follow~\cite{PPY} in our exposition.  We start with the following identity
\cite[Cor.~4.5]{BDO}:
\begin{equation}\label{eq:BDO}
\rg(\al,\be,\ga) \, = \,
\sum_{m=0}^{\lfloor k/2\rfloor} \, \sum_{\pi\vdash q+m-b} \, \sum_{\rho\vdash q+m-a} \, \sum_{\si \vdash m}\,
\sum_{\la,\mu,\nu\vdash k-2m} \ c^{\al}_{\nu\pi\rho} \, c^{\be}_{\mu\pi\si} \, c^{\ga}_{\la\rho\si} \, g(\la,\mu,\nu)
\ts,
\end{equation}
where $a=|\al|$, $b=|\be|$, $q=|\ga|$, $k=a+b-q$, and
$$
c^\la_{\al\ts \be\ts\ga} \. = \, \sum_{\tau} \. c^{\la}_{\al\tau} \ts c^{\tau}_{\be\ga}\ts.
$$
For the upper bound, by~\cite[Thm~1.5]{PPY} which extends~$(\ast\ast)$ in
Theorem~\ref{t:stanley}, we have:
$$c^\la_{\al\be} \. \le \. \binom{N}{a}^{1/2} \quad \ \text{for all} \ \ \la\vdash N, \,\. \al\vdash a, \,\. \be \vdash N-a\ts.
$$
Using the \emph{Vandermonde identity} for the sums of binomial coefficients, we have:
$$c^\la_{\al\ts \be\ts\ga} \. \le \. \binom{N}{a,b,N-a-b}^{1/2} \, \le \. 3^{N/2} \quad \text{for all}
\ \ \la\vdash N, \,\.\al\vdash a, \,\. \be \vdash b, \,\.\ga\vdash N-a-b\ts.
$$
In this notation, the theorem is a maximum over \ts $a+b+q \le 3n$.
Combining these with~$(\ast)$ in Theorem~\ref{t:stanley},
we have:
$$
\rg(\al,\be,\ga) \, \le \, (3\ts n/2) \cdot p(3n)^6 \cdot 3^{3n/2} \cdot \sqrt{n!}
\, = \, \sqrt{n!} \,\. e^{O(n)}\ts.
$$
\nin
For the lower bound, let \ts $\al,\be,\ga\vdash n$, and
note that \ts $\rg(\al,\be,\ga) \ge g(\al,\be,\ga)$, which is achieved
in~\eqref{eq:BDO} for $m=0$.  The result
now follows from part~$(\ast)$ of Theorem~\ref{t:stanley}.
\ $\sq$

\smallskip

\subsection{Proof of Theorem~\ref{t:SP-red}}
Let $\la=(\la_1,\la_2,\ldots)\vdash n$, which can be viewed
as an infinite nonincreasing sequence by appending zeros at the end.
Denote \ts $\wt\la := (\la_2,\la_3,\ldots)$.  For all $i\ge 1$, define
$$\la^{\<i\>} \. := \. \bigl(\la_1+1,\la_2+1,\ldots, \la_{i-1}+1,\la_{i+1},\la_{i+2},\ldots\bigr),
$$
so in particular \ts $\la^{\<1\>}=\wt\la$.
The result is a direct consequence of the following identity:
\begin{equation}\label{eq:BOR}
g(\la,\mu,\nu) \. = \, \sum_{i=1}^{\ell(\mu)\ts\ell(\nu)} \. (-1)^i \,\, \rg\bigl(\la^{\<i\>},\wt\mu,\wt\nu\bigr)\ts,
\end{equation}
see~\cite[Thm~1.1]{BOR2}.
From Theorem~\ref{t:IMW}, computing $g(\la,\mu,\nu)$ is \ts $\SP$-hard in unary.
The identity~\eqref{eq:BOR} has polynomially many terms, and thus gives a polynomial
reduction.  \ $\sq$

\medskip

\section{Final remarks and open problems}\label{s:finrem}

\subsection{} \label{ss:finrem-moral}
All three results in this paper are centered
around the same (philosophical) claim, that the reduced Kronecker
coefficients are closer in nature to the (usual) Kronecker
coefficients than to the LR--coefficients.  This is manifestly
evident from both the statements and the proofs of the theorems.
In fact, the only clue we know of the difference is the result
in~\cite{Ent}. \ts 
However, this claim should
not be taken as a suggestion that the LR--coefficients are not
strongly $\SP$-hard.  We do, in fact, conjecture that
computing $c^\la_{\mu\nu}$ is strongly $\SP$-hard \cite[Conj.~8.1]{PP},
but this remains beyond the reach of existing technology.

\subsection{} \label{ss:finrem-gen}
There is a general setting which extends the
stability of Kronecker coefficients to other families
of stable limits, see~\cite{SS}.  Manivel asks if
the saturation property holds for all these families,
but notes that ``we actually have only very limited
evidence for that'' \cite{Manivel}.  In view of our
results, it would be interesting to see if the
saturation property holds for \emph{any} of these
families of stable coefficients.

\subsection{}\label{ss:finrem-IMW}
Theorem~\ref{t:IMW} is not stated in~\cite{IMW} in this
form.  It does however follow directly from the proof,
which is essentially a parsimonious reduction from
the {\sc 3-Partition} problem classically known to
be (strongly) $\NP$-complete, and thus the counting is
(strongly) $\SP$-complete.

\subsection{}
By analogy with~\cite{PPY}, it would be interesting to
find the asymptotic limit shape of partitions \ts
$\al,\be,\ga$ \ts which achieve a maximum in
Theorem~\ref{t:kron-max}. We believe the bounds in
our proof of the theorem can be improved to show that
all three shapes are Plancherel of the same size.

\subsection{} \label{ss:finrem-Mulmuley}
Among other consequences, the saturation property
implies that the \emph{vanishing problem} \ts
$c^\la_{\mu\nu}>^?0$ \ts is in~$\poly$, see~\cite{MNS}.
The main result of~\cite{IMW} proved that the vanishing
problem \ts $g(\la,\mu,\nu)>^?0$ \ts is $\NP$-hard,
refuting Mulmuley's conjecture (see e.g.~\cite[$\S$2]{PP}).
Following the pattern in~$\S$\ref{ss:finrem-moral} above,
we conjecture that the vanishing problem \ts $\rg(\al,\be,\ga)>^?0$
\ts for reduced Kronecker coefficients is also $\NP$-hard.

\subsection{} \label{ss:finrem-Iken}
There is a subtle but important technical differences between 
the way we state Theorem~\ref{t:IMW} and the way it is stated 
in~\cite{IMW}.  While we implicitly use the (standard) 
\emph{Turing reduction} to derive Theorem~\ref{t:SP-red} 
from Theorem~\ref{t:IMW}, 
the original proof in~\cite{IMW} 
uses a more restrictive \emph{many-to-one reduction} 
(sometimes called \emph{Karp reduction}). 
Such a reduction for Theorem~\ref{t:SP-red} would also resolve
our conjecture above on the vanishing problem for the reduced 
Kronecker coefficients. 

\subsection{} In an appendix to~\cite{BOR1}, Mulmuley
conjectures a weaker property than the saturation,
for the stretched Kronecker
coefficients, which would in turn imply a polynomial result for
a closely related complexity problem.  See also~\cite{Kir} for further
saturation related conjectures.


\vskip.4cm
{\small

\noindent
{\bf Acknowledgements.} \
The authors are grateful to Christine Bessenrodt, Chris Bowman and
Rosa Orellana for interesting conversations
and helpful remarks.  Special thanks to Christian Ikenmeyer
for the explanation of the inner working of~\cite{IMW}, and to
Mike Zabrocki for help with the computer algebra. The results
of the paper were obtained during authors' back to back visits
at the Oberwolfach Research Institute for Mathematics and the
Mittag-Leffler Institute; we are grateful for their hospitality.
Both authors were partially supported by the NSF.}

\vskip.7cm


\end{document}